\title{Complex group rings and group C\Star algebras \\ of group extensions}
\author{Johan \"{O}inert \& Stefan Wagner}
\theoremstyle{plain}
	\newtheorem{thm}{Theorem}[section]
	\newtheorem{lemma}[thm]{Lemma}
	\newtheorem{cor}[thm]{Corollary}
\theoremstyle{definition}
	\newtheorem{rmk}[thm]{Remark}
	\newtheorem{expl}[thm]{Example}
	\newtheorem{prob}{Problem}
\newcommand*{\Z}{\mathbb Z}		% integers
\newcommand*{\C}{\mathbb C}		% complex numbers
\newcommand*{\one}{\mathbbm 1}		% unit of an algebra
\DeclareMathOperator{\id}{id}		% identity operator
\DeclareMathOperator{\Supp}{Supp}		%
\newcommand{\cf}{\mbox{cf.}\xspace}			% confer 
\newcommand{\eg}{\mbox{e.\,g.}\xspace}			% exempli gratia
\newcommand*{\ie}{\mbox{i.\,e.}\xspace}			% id est
\newcommand{\wrt}{\mbox{w.\,r.\,t.}\xspace}		% with respect to
\newcommand*{\ndash}{\nobreakdash-}
\newcommand*{\Star}{$^*$\ndash}
\DeclareMathOperator{\Aut}{Aut}
\DeclareMathOperator{\Out}{Out}
\DeclareMathOperator{\Ext}{Ext}
\begin{document}

\author{
Johan \"Oinert \thanks{
		Blekinge Tekniska H\"ogskola,
		\href{mailto:johan.oinert@bth.se}{\nolinkurl{johan.oinert@bth.se}}
	} \and 
	Stefan Wagner \thanks{
		Blekinge Tekniska H\"ogskola,
		\href{mailto:stefan.wagner@bth.se}{\nolinkurl{stefan.wagner@bth.se}}
	}
}
\sloppy
\maketitle

\begin{abstract}
	\noindent
	Let $N$ and $H$ be groups, and let $G$ be an extension of $H$ by $N$. 
	In this article we describe the structure of the complex group ring of $G$ in terms of data associated with $N$ and $H$. 
	In particular, we present conditions on the building blocks $N$ and $H$ guaranteeing that $G$ satisfies the 
	zero-divisor and idempotent conjectures.
	Moreover, for central extensions involving amenable groups we present conditions on the building blocks guaranteeing that the Kadison-Kaplansky conjecture holds for the group C\Star algebra of $G$.
	
	\vspace*{0,5cm}

	\noindent
	Keywords: Complex group ring, group extension, crossed product, crossed system, torsion-free group, 
	zero-divisor conjecture, idempotent conjecture, group C\Star algebra, Kadison-Kaplansky conjecture.
	
	\noindent
	MSC2020: 16S34, 16S35, 20C07, 20E22
	
	\noindent Data Availability Statement: This manuscript has no associated data.
\end{abstract}

\section{Introduction}

Group rings first appeared implicitly in 1854 in an article by A. Cayley~\cite{Cayley1854} and explicitly in 1897 in an article by T.~Molien~\cite{Mol97}.
They are very interesting algebraic structures whose importance became apparent after the work of \eg R.~Brauer, E.~Noether, G.~Frobenius, H.~Maschke and I.~Schur in the beginning of the last century.

With a few exceptions, the first articles on group rings of infinite groups appeared in the early 1950s. 
A key person 
in that line of research was I.~Kaplansky, known for his many deep contributions to ring theory and operator algebra.
In his famous talk, given at a conference that was held on June 6-8, 1956 at Shelter Island, Rhode Island, New York, he proposed twelve problems in the theory of rings. One of those problems was the following (see \eg~\cite{Kap56,Kap70})
which nowadays is known as the \emph{zero-divisor problem}.

\begin{prob}\label{probZ}
Let $K$ be a field and let $G$ be a torsion-free group. Is $K[G]$ a domain?
\end{prob}

Many of the problems in Kaplansky's list have been solved, and it has been shown that Problem~\ref{probZ} has an affirmative answer for several important classes of groups (see~\eg \cite{Bro1976,Cli1980,For1973,KrophollerLinnellMoody,Li77}). 
However, for a general group $G$, the answer to the problem remains unknown. 
The assertion that it has an affirmative answer is commonly referred to as \emph{Kaplansky's zero-divisor conjecture}.

Although popularized by Kaplansky, Problem~\ref{probZ} and its corresponding conjecture had in fact already been introduced by G. Higman in his 1940 thesis~\cite[p.~77]{Higman1940} (see also \cite[p.~112]{Sandling1981}).
In~\cite{Higman1940}, Higman also introduced the so-called \emph{unit problem} (see Problem~\ref{probU} below) and the corresponding \emph{unit conjecture}.

\begin{prob}\label{probU}
Let $K$ be a field and let $G$ be a torsion-free group. Is every unit in $K[G]$ trivial, \ie a scalar multiple of a group element?
\end{prob}

The unit problem 
has been answered affirmatively for special classes of groups (see \eg~\cite{CrPa2013,Pa77}), 
but in 2021 G. Gardam~\cite{Gardam} gave an example of a group ring $K[P]$ possessing non-trivial units, where $K$ is the field of two elements and $P$ is Passman's fours group. 
Building on Gardam's ideas, further counterexamples to the unit conjecture in positive characteristic were provided by A.~G.~Murray~\cite{Murray}. 
Another problem, which is closely related to the above problems, is the following.

\begin{prob}\label{probI}
Let $K$ be a field and let $G$ be a torsion-free group. Is every idempotent in $K[G]$ trivial, \ie either $0$ or $\one$?
\end{prob}

This problem is known as the \emph{idempotent problem},
and the corresponding conjecture is called \emph{the idempotent conjecture}. 
Using algebraic methods as well as analytical methods, a lot of progress (see \eg~\cite{Bur1970,For1973b} and also~\cite{Hig2001,Mar1986,Min2002,Pus2002}) has been made on Problem~\ref{probI}.
Nevertheless, for a general group $G$, the answer to Problem~\ref{probI} remains unknown. 
In the last two decades, however, Problem~\ref{probI} has regained interest, mainly due to its intimate connection with the Baum-Connes conjecture in operator algebras (see \eg~\cite{Val2002}) via the so-called Kadison-Kaplansky conjecture for reduced group C\Star algebras.

There is a mutual hierarchy between Problems~\ref{probZ},~\ref{probU} and~\ref{probI}.
Indeed, for fixed $K$ and~$G$, it is easy to see that an affirmative answer to Problem~\ref{probZ} yields that Problem~\ref{probI} has an affirmative answer.
Furthermore, using a result of D. S. Passman's (see \cite[Lem.~1.2]{Pa77}),
we conclude that an affirmative answer to Problem~\ref{probU} yields an affirmative answer to Problem~\ref{probZ}.
For a thorough account of the development on the above problems (mainly) during the 1970s, we refer the reader to \cite{Pa77}.

In this article we shall restrict our attention to complex group rings, \ie the case when $K=\C$.
Our aim is to contribute to a better understanding of Kaplansky's conjectures by studying complex group rings of group extensions. 
More concretely, let $N$ and $H$ be two groups. 
Furthermore, let $G$ be an extension of $H$ by $N$. 
Our main objective is to investigate the structure of $\C[G]$ in terms of data associated with the building blocks $N$ and $H$. 
This article is organized as follows.

In Section~\ref{sec:PrelNot} we record the most important preliminaries and notation.
In particular, we discuss crossed products and crossed systems.

In Section~\ref{sec:repcrosprod} we represent $\C[G]$ as a crossed product of the complex group ring $\C[N]$ and $H$, where the respective crossed system is associated with the factor system of the underlying group extension (see Theorem~\ref{thm:iso}). 
Although this might be well-known to experts (\cf~\cite[p.4]{NVO2004}), we have not found such a statement explicitly discussed in the literature. 
Moreover, as an application, we show that if $\C[N]$ is a domain and $H$ is a unique product group, then $G$ satisfies Kaplansky's complex
zero-divisor conjecture 
(see Theorem~\ref{thm:kap1} and Corollary~\ref{cor:kap1}). 
We conclude the section with 
several examples and remarks.

In Section~\ref{sec:repglobsec} we consider central extensions, \ie, $N$ is central in $G$, from an alternative C\Star algebraic perspective. To this end, we employ a group-adapted version of the Dauns-Hofmann Theorem (\cf~\cite{DaHo68,Ho72}) to represent the group C\Star algebra $C^*(G)$ as a C\Star algebra of global continuous sections of a C\Star algebraic bundle over the dual group $\widehat{N}$. 
In this way we are able to show, under some technical assumptions, that if $C^*(H)$ contains no non-trivial idempotent, then the same assertion holds for $C^*(G)$ (see Lemma~\ref{lem:kap2} and Corollary~\ref{cor:kap2}).

\section{Preliminaries and notation}\label{sec:PrelNot}

Our study revolves around the structure of complex group rings of group extensions. 
Consequently, we blend tools from algebraic representation theory and the theory of group extensions. 
In this preliminary section we provide the most important definitions and notation which are repeatedly used in this article. 
In general, given a group $G$, we shall always write $e_G$, or simply $1$ or $e$, for its identity element.

\subsubsection*{Group extensions and factor systems}

Let $1 \to N \to G \stackrel{q}{\to} H \to 1$ be a short exact sequence of groups. 
We first revise a description of the extension $G$ in terms of data associated with $N$ and $H$. For this purpose, let $\sigma:H\rightarrow G$ be a section of $q$, which is \emph{normalized} in the sense that $\sigma(e_H)=e_G$. 
Then the map $N\times H\rightarrow G$, $(n,h)\mapsto n\sigma(h)$ is a bijection and may be turned into an isomorphism of groups by endowing $N\times H$ with the multiplication
\begin{align}
(n,h)(n',h')=(nS(h)(n')\omega(h,h'),hh'),\label{eq:grpmulti}
\end{align}
where $S:=C_N\circ\sigma:H\rightarrow \Aut(N)$ with $C_N:G\rightarrow \Aut(N)$, $C_N(g)=gng^{-1}$ and
\begin{align*}
\omega:H\times H\rightarrow N, \quad (h,h')\mapsto \omega(h,h'):=\sigma(h)\sigma(h')\sigma(hh')^{-1}.
\end{align*}
The pair $(S,\omega)$ is called a \emph{factor system} for $N$ and $H$ and we write $N\times_{(S,\omega)} H$ for the set $N\times H$ endowed with the group multiplication defined in Equation~(\ref{eq:grpmulti}). 
We also recall that the maps $S$ and $\omega$ satisfy the relations
\begin{align}
\label{eq:actcond}
S(h)S(h')&=C_N(\omega(h,h'))S(hh'), 
\\
\label{eq:cyclecond}
\omega(h,h')\omega(hh',h'')&=S(h)(\omega(h',h''))\omega(h,h'h'')
\end{align}
for all $h,h',h''\in H$. 
For a detailed background on group extensions and factor systems we refer the reader to \cite[Chap. IV]{Mac67}.

\subsubsection*{Crossed products and crossed systems}

Let $H$ be a group and let $R=\bigoplus_{h\in H} R_h$ be a unital $H$-graded ring, \ie, $R_h R_{h'}\subseteq R_{hh'}$ for all $h,h'\in G$. 
We write $R^\times$ for the group of invertible elements of $R$ and
\begin{align*}
R^{\times}_{\text{h}}:=\bigcup_{h\in H}\left(R^\times\bigcap R_h\right)
\end{align*}
for its group of homogeneous units.
If $R^\times\bigcap R_h\neq\emptyset$ for all $h\in H$, \ie, each $R_h,h\in H$, contains an invertible element, then $R$ is called a $(R_e,H)$-\emph{crossed product}. 

Given a unital ring $R_0$ and a group $H$, a $(R_0,H)$-\emph{crossed system} is a pair $(\bar{S},\bar{\omega})$ consisting of two maps $\bar{S}:H\rightarrow\Aut(R_0)$ and $\bar{\omega}:H\times H\rightarrow R_0^\times$ satisfying the normalization conditions $\bar{S}(e)=\id_{R_0}$, $\bar{\omega}(h,e)=\bar{\omega}(e,h)=1_{R_0}$, and
\begin{align}
\label{eq:coactcond}
\bar{S}(h)\bar{S}(h')&=C_{R_0}(\bar{\omega}(h,h'))\bar{S}(hh'), 
\\
\label{eq:algcyclecond}
\bar{\omega}(h,h')\bar{\omega}(hh',h'')&=\bar{S}(h)(\bar{\omega}(h',h''))\bar{\omega}(h,h'h'')
\end{align}
for all $h,h',h''\in G$, where $C_{R_0}:R_0^\times\rightarrow\Aut(R_0)$, $C_{R_0}(r)(s):=rsr^{-1}$ denotes the canonical conjugation action. 
It is not hard to check that each crossed product gives rise to a crossed system and vice versa. For details we refer the reader to \cite{NVO2004}.

\subsubsection*{Complex group rings}

The \emph{complex group ring} $\mathbb{C}[G]$ of a group $G$ is the space of all functions $f:G\to\mathbb{C}$ with finite support endowed with the usual convolution product of functions which we shall denote by $\star$. 
Each element in $\mathbb{C}[G]$ can be uniquely written as a sum $\sum_{g\in G}f_g \delta_g$ with only finitely many non-zero coefficients $f_g \in \C$ and the Dirac functions
\begin{align*}
\delta_g:G\to\mathbb{C}, \quad \delta_g(h)=\begin{cases}
1 \quad \text{if} \,\,\, g=h\\
0 \quad \text{otherwise}.
\end{cases}
\end{align*}
Given elements $f=\sum_{g\in G}f_g \delta_g$ and $f'=\sum_{g\in G}f'_g \delta_g$ in $\mathbb{C}[G]$, we have 
\begin{align*}
 (f\star f')(h)=\sum_{g\in G}f_{g}f'_{g^{-1}h} 
\end{align*}
for all $h\in G$. 
In particular, $\mathbb{C}[G]$ is unital with identity $\delta_e$ and $G$-graded \wrt the natural decomposition $\C[G]=\bigoplus_{g\in G}\C\cdot\delta_g$. 
Since each $\delta_g$, $g\in G$, is homogeneous and invertible, $\mathbb{C}[G]$ is, in fact, a $(\C,G)$-crossed product.
Also, $\mathbb{C}[G]$ has a natural involution given by
\begin{align*}
^*:\C[G] \rightarrow \C[G], \quad f=\sum_{g\in G} f_g \delta_g \mapsto f^*:=\sum_{g\in G}\bar{f_g} \delta_{g^{-1}}
\end{align*}
and may be equipped with several appropriate norms. 
Interesting to us is the 1-norm $\Vert\cdot\Vert_1:\mathbb{C}[G]\to[0,\infty)$ defined by $\Vert f\Vert_1:=\sum_{g\in G} \vert f_g \vert$ turning $\C[G]$ into a normed \Star algebra. 
The corresponding universal enveloping C\Star algebra is the full group C\Star algebra $C^*(G)$.

\section{Representation via crossed products}\label{sec:repcrosprod}

Throughout this section, let $1 \to N \to G \stackrel{q}{\to} H \to 1$ be a short exact sequence of discrete groups. 
Furthermore, let $\sigma:H \to G$ be a section of $q$ and let $(S,\omega)$ be the corresponding factor system for $N$ and $H$. 

We wish to give a description of the complex group ring $\C[G]$ in terms of data associated with the groups $N$ and $H$. 
In fact, since $N$ is a normal subgroup of $G$, we may also consider $\C[G]$ as an $H$-graded ring 
\begin{align*}
\C[G]=\bigoplus_{h\in H}\C[N]_h
\end{align*}
with homogeneous components $\C[N]_h:=\C[N]\star\delta_{\sigma(h)}$. 
In this representation of $\C[G]$, each element can be uniquely written as a sum $\sum_{h\in H}f_h \star \delta_{\sigma(h)}$
with only finitely many non-zero coefficients $f_h \in \C[N]$. 
Furthermore, each homogeneous component contains an invertible element, and consequently $\C[G]$ is, in fact, a $(\C[N],H)$-\emph{crossed product}. 

We now provide a $(\C[N],H)$-\emph{crossed system} for the $(\C[N],H)$-\emph{crossed product} $\C[G]$ which is based on the factor system $(S,\omega)$. 
To this end, we first introduce the map 
\begin{align}\label{eq:algsec}
\bar{\sigma}:H\rightarrow \C[G]^\times, \quad \bar{\sigma}(h):=\delta_{\sigma(h)},
\end{align}
where $\C[G]^\times$ denotes the group of invertible elements of $\C[G]$. 
Then we define
\begin{align}\label{eq:algact}
\bar{S}:=C_{\C[G]}\circ\bar{\sigma}:H\rightarrow\Aut(\C[N]), 
\end{align}
where $C_{\C[G]}:\C[G]^\times\rightarrow\Aut(\C[G])$ denotes the canonical conjugation action, and
\begin{align}\label{eq:algcycle}
\bar{\omega}:H\times H \to \C[N], \quad \bar{\omega}(h,h'):=\bar{\sigma}(h)\star\bar{\sigma}(h')\star\bar{\sigma}(hh')^{-1}=\delta_{\omega(h,h')}.
\end{align}

\begin{lemma}\label{lem:algfacsys}
The pair $(\bar{S},\bar{\omega})$ is a $(\C[N],H)$-crossed system.  
\end{lemma}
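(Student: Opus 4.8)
The plan is to exploit that the assignment $g\mapsto\delta_g$ is a group homomorphism from $G$ into $\C[G]^\times$ (since $\delta_g\star\delta_{g'}=\delta_{gg'}$ and each $\delta_g$ is invertible), so that $\bar{\sigma}=(g\mapsto\delta_g)\circ\sigma$ and the pair $(\bar{S},\bar{\omega})$ is nothing but the image of the factor system $(S,\omega)$ under this homomorphism. Accordingly, each defining identity of a $(\C[N],H)$-crossed system should be obtained by ``applying $\delta$'' to the corresponding identity of the factor system recorded in Equations~(\ref{eq:actcond}) and~(\ref{eq:cyclecond}). Concretely, I would verify in turn: well-definedness of $\bar{S}$ and $\bar{\omega}$ with the claimed targets, the normalization conditions, and finally the two relations~(\ref{eq:coactcond}) and~(\ref{eq:algcyclecond}).

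First I would settle well-definedness. Since $N$ is normal in $G$, conjugation by $\sigma(h)$ maps $N$ onto $N$; passing to Dirac functions, $\bar{S}(h)=C_{\C[G]}(\delta_{\sigma(h)})$ maps $\C[N]=\lin\{\delta_n:n\in N\}$ onto itself and therefore restricts to an automorphism of $\C[N]$. In fact, for $n\in N$ one computes directly $\bar{S}(h)(\delta_n)=\delta_{\sigma(h)}\star\delta_n\star\delta_{\sigma(h)^{-1}}=\delta_{\sigma(h)n\sigma(h)^{-1}}=\delta_{S(h)(n)}$, which is the key identity to record: $\bar{S}(h)$ acts on the Dirac basis of $\C[N]$ exactly as $S(h)$ acts on $N$. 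Moreover $\bar{\omega}(h,h')=\delta_{\omega(h,h')}\in\C[N]^\times$, because $\omega(h,h')\in N$ and every Dirac function is invertible.

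Next I would dispatch the normalization conditions. The normalization $\sigma(e_H)=e_G$ gives $\bar{\sigma}(e)=\delta_{e_G}=\one_{\C[G]}$, whence $\bar{S}(e)=\id_{\C[N]}$; it likewise forces $\omega(h,e)=\omega(e,h)=e_N$, so that $\bar{\omega}(h,e)=\bar{\omega}(e,h)=\delta_{e_N}=\one_{\C[N]}$. For Equation~(\ref{eq:coactcond}) I would compute $\bar{S}(h)\bar{S}(h')=C_{\C[G]}(\delta_{\sigma(h)}\star\delta_{\sigma(h')})=C_{\C[G]}(\delta_{\sigma(h)\sigma(h')})$ and then substitute $\sigma(h)\sigma(h')=\omega(h,h')\sigma(hh')$, giving $\delta_{\sigma(h)\sigma(h')}=\bar{\omega}(h,h')\star\bar{\sigma}(hh')$; since $C_{\C[G]}$ is a homomorphism and $\bar{\omega}(h,h')\in\C[N]$, the right-hand side factors as $C_{\C[N]}(\bar{\omega}(h,h'))\,\bar{S}(hh')$, as required. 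For Equation~(\ref{eq:algcyclecond}) I would apply the homomorphism $g\mapsto\delta_g$ to the cocycle identity~(\ref{eq:cyclecond}): the left-hand side becomes $\bar{\omega}(h,h')\star\bar{\omega}(hh',h'')$, while on the right-hand side the key identity converts $\delta_{S(h)(\omega(h',h''))}$ into $\bar{S}(h)(\bar{\omega}(h',h''))$, yielding $\bar{S}(h)(\bar{\omega}(h',h''))\star\bar{\omega}(h,h'h'')$.

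I do not expect a serious obstacle: once the multiplicative map $g\mapsto\delta_g$ is in place, the whole verification is a faithful transcription of the factor-system axioms. The only point that genuinely demands care is the well-definedness step, namely that conjugation by $\delta_{\sigma(h)}$ preserves the \emph{subalgebra} $\C[N]$ rather than merely landing in the larger ring $\C[G]$; this is precisely where the normality of $N$ in $G$ is used, and it is what guarantees that $\bar{S}$ takes values in $\Aut(\C[N])$ and that $C_{\C[G]}$ may be replaced by $C_{\C[N]}$ in the step above.
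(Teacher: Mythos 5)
Your proposal is correct and follows essentially the same route as the paper: normalization is checked directly, the first axiom is reduced to the identity $\sigma(h)\sigma(h')=\omega(h,h')\sigma(hh')$, and the second axiom is obtained by applying the multiplicative map $g\mapsto\delta_g$ to the classical cocycle relation~(\ref{eq:cyclecond}). Your explicit attention to well-definedness (that normality of $N$ forces $\bar{S}(h)$ to restrict to an automorphism of $\C[N]$) is a point the paper leaves implicit, but the substance of the argument is the same.
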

\begin{proof}
It is easily seen that $\bar{S}(e)=\id_{\C[N]}$ and that $\bar{\omega}(g,e)=\bar{\omega}(e,g)=\delta_e$ for all $g\in G$. 
Next, we establish Equation~(\ref{eq:coactcond}). 
For this, let $h,h'\in H$ and let $f\in\C[N]$. 
Then a few moments thought show that
\begin{align*}
\bar{S}(h)\bar{S}(h')(f)
&=\delta_{\sigma(h)}\star\delta_{\sigma(h')} \star f \star \delta_{\sigma(h')^{-1}}\star\delta_{\sigma(h)^{-1}}
\\
&=\delta_{\sigma(h)\sigma(h')} \star f \star \delta_{\sigma(h')^{-1}\sigma(h)^{-1}}
=\delta_{\omega(h,h')\sigma(hh')} \star f \star \delta_{\sigma(hh')^{-1}\omega(h,h')^{-1}}
\\
&=\delta_{\omega(h,h')}\star\delta_{\sigma(hh')} \star f \star \delta_{\sigma(hh')^{-1}}\star\delta_{\omega(h,h')^{-1}}
=C_{\C[N]}(\bar{\omega}(h,h'))\bar{S}(hh')(f).
\end{align*}
To verify Equation~(\ref{eq:algcyclecond}), we choose $h,h',h''\in H$. 
Then a short computation yields 
\begin{align*}
\bar{S}(h)(\bar{\omega}(h',h''))\star\bar{\omega}(h,h'h'')&=\delta_{\sigma(h)}\star\delta_{\omega(h',h'')}\star\delta_{\sigma(h)^{-1}}\star\delta_{\omega(h,h'h'')}
\\
&=\delta_{\sigma(h)\omega(h',h'')\sigma(h)^{-1}\omega(h,h'h'')}=\delta_{S(h)(\omega(h',h''))\omega(h,h'h'')}.
\end{align*}
On the other hand, it is straightforwardly checked that
\begin{align*}
\bar{\omega}(h,h') \star \bar{\omega}(hh',h'')=\delta_{\omega(h,h')}\star\delta_{\omega(hh',h'')}=\delta_{\omega(h,h')\omega(hh',h'')}.
\end{align*}
Consequently, Equation~(\ref{eq:algcyclecond}) follows from the classical cocycle Equation~\eqref{eq:cyclecond}.
\end{proof}

Next, we write $\C[N]\times_{(\bar{S},\bar{\omega})} H$ for the vector space $\bigoplus_{h\in H}\C[N]d_h$ with basis $(d_h)_{h\in H}$ endowed with the multiplication $\bullet$ given on homogeneous elements by
\begin{align}
fd_h \bullet f'd_{h'}:=f \star \bar{S}(h)(f') \star \bar{\omega}(h,h') d_{hh'},\label{eq:algmulti}
\end{align}
where $f,f'\in\C[N]$ and $h,h'\in H$. It follows from Lemma~\ref{lem:algfacsys} that $\C[N]\times_{(\bar{S},\bar{\omega})} H$
is a well-defined associative algebra with multiplicative identity $d_e$. 
Moreover, a few moments thought show that $\C[N]\times_{(\bar{S},\bar{\omega})} H$ carries the structure of a $(\C[N],H)$-crossed product. 
A short computation involving the algebraic equations from Lemma~\ref{lem:algfacsys} now yields:

\begin{thm}\label{thm:iso}
Using the representation $\C[G]=\bigoplus_{h\in H}\C[N]_h$, the map  
\begin{align*}
\Phi: \C[G] \rightarrow \C[N]\times_{(\bar{S},\bar{\omega})} H, \quad f=\sum_{h\in H}f_h \star \delta_{\sigma(h)} \mapsto \sum_{h\in H} f_h d_h,
\end{align*}
is an isomorphism of $(\C[N],H)$-crossed products.
\end{thm}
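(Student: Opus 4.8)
The plan is to check that $\Phi$ is a unital, bijective, multiplicative map that carries the $H$-grading of $\C[G]$ to that of $\C[N]\times_{(S,\omega)}H$; since a degree-preserving algebra isomorphism between two crossed products automatically maps the group of homogeneous units of one onto that of the other, this suffices to establish that $\Phi$ is an isomorphism of $(\C[N],H)$-crossed products. First I would settle bijectivity and unitality, which are essentially formal, and then concentrate on the multiplicativity, which is the only genuine computation.

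For bijectivity, recall that the assignment $N\times H\to G$, $(n,h)\mapsto n\sigma(h)$, is a bijection, so $\{\delta_n\star\delta_{\sigma(h)}: n\in N,\,h\in H\}$ is a $\C$-basis of $\C[G]$; under $\Phi$ this basis is sent to the basis $\{\delta_n d_h: n\in N,\,h\in H\}$ of $\C[N]\times_{(S,\omega)}H$, whence $\Phi$ is a linear isomorphism. Unitality is immediate from the normalization $\sigma(e_H)=e_G$, which gives $\Phi(\delta_{e_G})=d_{e_H}$, the multiplicative identity of the target. Moreover, $\Phi$ restricts to the identity on the base ring $\C[N]=\C[N]_{e_H}$ and satisfies $\Phi(\C[N]_h)=\C[N]d_h$ by construction, so it respects the grading.

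It remains to verify $\Phi(x\star y)=\Phi(x)\bullet\Phi(y)$, and by linearity it suffices to treat homogeneous $x=f\star\delta_{\sigma(h)}$ and $y=f'\star\delta_{\sigma(h')}$ with $f,f'\in\C[N]$. The idea is to rewrite $x\star y$ inside $\C[G]$ in graded form $(\,\cdot\,)\star\delta_{\sigma(hh')}$. Using the definition $\bar{S}(h)=C_{\C[G]}(\delta_{\sigma(h)})$ from Equation~(\ref{eq:algact}) to move $\delta_{\sigma(h)}$ past $f'$, namely $\delta_{\sigma(h)}\star f'=\bar{S}(h)(f')\star\delta_{\sigma(h)}$, and then invoking the defining identity $\sigma(h)\sigma(h')=\omega(h,h')\sigma(hh')$ together with $\bar{\omega}(h,h')=\delta_{\omega(h,h')}$ from Equation~(\ref{eq:algcycle}), I obtain that the degree-$hh'$ coefficient of $x\star y$ equals $f\star\bar{S}(h)(f')\star\bar{\omega}(h,h')$. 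Applying $\Phi$ therefore yields $f\star\bar{S}(h)(f')\star\bar{\omega}(h,h')\,d_{hh'}$, which is precisely $(fd_h)\bullet(f'd_{h'})$ according to the multiplication in Equation~(\ref{eq:algmulti}). Since Lemma~\ref{lem:algfacsys} guarantees that the target is a well-defined associative crossed product, no associativity obstruction intervenes, and the only real work is this single computation; I expect no difficulty beyond careful bookkeeping of the factor-system data.
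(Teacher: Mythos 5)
Your proposal is correct and is essentially the calculation the paper itself alludes to (the paper gives no explicit proof, stating only that the theorem follows from ``a short calculation involving the algebraic equations from Lemma~\ref{lem:algfacsys}''): the key identity $\delta_{\sigma(h)}\star f'=\bar{S}(h)(f')\star\delta_{\sigma(h)}$ combined with $\sigma(h)\sigma(h')=\omega(h,h')\sigma(hh')$ is exactly how the multiplication~(\ref{eq:algmulti}) was designed, and your treatment of bijectivity, unitality and grading is sound.
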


We have just seen that the factor system $(S,\omega)$ gives rise to a $(\C[N],H)$-crossed product that is isomorphic to $\C[G]$. 
Conversely, keeping in mind that $N\subseteq \C[N]^\times$ via $n\mapsto\delta_n$ we have the following result: 

\begin{cor}
Suppose that $(\bar{S},\bar{\omega})$ is an abstract $(\C[N],H)$-crossed system. Then the corresponding $(\C[N],H)$-crossed product $\C[N]\times_{(\bar{S},\bar{\omega})} H$ is isomorphic to $\C[G]$ for some extension $G$ of $H$ by $N$ if $\bar{S}(H)(N)\subseteq N$ and $\emph{image}(\bar{\omega})\subseteq N$. 
If this holds, then the factor system $(S,\omega)$ for $G$ is defined by $S(h):=\bar{S}(h)_{\mid N}$, $h\in H$, and the corestriction of $\bar{\omega}$ to $N$.
\end{cor}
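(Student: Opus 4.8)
The plan is to reverse the construction of Section~\ref{sec:repcrosprod}: from the abstract crossed system $(\bar S,\bar\omega)$ I will manufacture a genuine factor system $(S,\omega)$ for $N$ and $H$, assemble the associated group extension $G:=N\times_{(S,\omega)}H$, and then let Theorem~\ref{thm:iso} do the work of identifying $\C[G]$ with the given crossed product. Concretely, recall the embedding $N\hookrightarrow\C[N]^\times$, $n\mapsto\delta_n$. The hypotheses $\bar S(H)(N)\subseteq N$ and $\operatorname{Im}(\bar\omega)\subseteq N$ guarantee that for each $h\in H$ the automorphism $\bar S(h)$ sends every $\delta_n$ to some $\delta_m$, and that each $\bar\omega(h,h')$ equals some $\delta_n$. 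This is exactly what is needed to define set-maps $S(h):N\to N$ by $\bar S(h)(\delta_n)=\delta_{S(h)(n)}$ and $\omega:H\times H\to N$ by $\bar\omega(h,h')=\delta_{\omega(h,h')}$.

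First I would check that $(S,\omega)$ is a factor system, i.e.\ that it satisfies the normalization conditions together with Equations~(\ref{eq:actcond}) and~(\ref{eq:cyclecond}). Since $n\mapsto\delta_n$ is an injective multiplicative map and each $\bar S(h)$ is multiplicative, $S(h)$ is a homomorphism of $N$; the normalization $\bar S(e)=\id_{\C[N]}$ and $\bar\omega(h,e)=\bar\omega(e,h)=\delta_e$ translate verbatim into $S(e)=\id_N$ and $\omega(h,e)=\omega(e,h)=e_N$. Equations~(\ref{eq:actcond}) and~(\ref{eq:cyclecond}) then follow by reading off Equations~(\ref{eq:coactcond}) and~(\ref{eq:algcyclecond}) on the elements $\delta_n$ and invoking the injectivity of $n\mapsto\delta_n$, noting that $C_{\C[N]}(\delta_n)$ restricts to $C_N(n)$ on $N$.

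The step I expect to be the main obstacle is showing that each $S(h)$ is actually an \emph{automorphism} of $N$ rather than merely an endomorphism, since a priori I only know $\bar S(h)(N)\subseteq N$. To remedy this I would specialise Equation~(\ref{eq:coactcond}) to the pairs $(h,h^{-1})$ and $(h^{-1},h)$, obtaining $\bar S(h)\bar S(h^{-1})=C_{\C[N]}(\bar\omega(h,h^{-1}))$ and $\bar S(h^{-1})\bar S(h)=C_{\C[N]}(\bar\omega(h^{-1},h))$, where I have used $\bar S(e)=\id_{\C[N]}$. Because the relevant values of $\bar\omega$ lie in $N$, the right-hand sides restrict to inner automorphisms of $N$ and are in particular bijections of $N$; since $\bar S(h)$ and $\bar S(h^{-1})$ both map $N$ into $N$, it follows that $S(h)\circ S(h^{-1})$ and $S(h^{-1})\circ S(h)$ are bijections of $N$, whence $S(h)$ is both surjective and injective. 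Thus $S(h)\in\Aut(N)$ and $(S,\omega)$ is a bona fide factor system, so that $G:=N\times_{(S,\omega)}H$ is an extension of $N$ by $H$.

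It remains to identify the algebraic crossed system induced by $(S,\omega)$ with the original $(\bar S,\bar\omega)$. By construction $\bar\omega(h,h')=\delta_{\omega(h,h')}$, which is precisely the map in Equation~(\ref{eq:algcycle}). For the action I would use that each $\bar S(h)$ is a $\C$-algebra automorphism of $\C[N]$ and is therefore determined by its values on the basis $\{\delta_n:n\in N\}$; since there $\bar S(h)(\delta_n)=\delta_{S(h)(n)}$ agrees with the automorphism from Equation~(\ref{eq:algact}), the two coincide on all of $\C[N]$. Hence the induced crossed system equals $(\bar S,\bar\omega)$, so that $\C[N]\times_{(\bar S,\bar\omega)}H$ is literally the crossed product attached to the factor system of $G$. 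An application of Theorem~\ref{thm:iso} now yields $\C[G]\cong\C[N]\times_{(\bar S,\bar\omega)}H$, completing the argument; the formulas $S(h)=\bar S(h)_{\mid N}$ and $\omega=\bar\omega_{\mid N}$ are exactly the definitions made above.
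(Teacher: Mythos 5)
The paper states this corollary without proof, so there is nothing to compare against line by line; your argument supplies exactly the intended reasoning hinted at by the preceding sentence (reversing the construction via $n\mapsto\delta_n$, reading off the factor-system identities from Equations~(\ref{eq:coactcond}) and~(\ref{eq:algcyclecond}), and invoking Theorem~\ref{thm:iso}). Your treatment of the one genuine subtlety --- that $\bar{S}(h)_{\mid N}$ is surjective onto $N$ and not merely an injective endomorphism, deduced from $\bar{S}(h)\bar{S}(h^{-1})=C_{\C[N]}(\bar{\omega}(h,h^{-1}))$ restricting to an inner automorphism of $N$ --- is correct and complete.
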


We now proceed to investigate the structure of the crossed product $\C[N]\times_{(\bar{S},\bar{\omega})} H$. 
Recall that a group $H$ is a \emph{unique product group} if for any two non-empty finite subsets $A,B\subseteq H$ there exists at least one element $h\in H$ which has a unique representation of the form $h=ab$ with $a\in A$ and $b\in B$.
In the next proof, given an element $f=\sum_{h\in H} f_h d_h\in\C[N]\times_{(\bar{S},\bar{\omega})} H$, we write $\Supp(f):=\{h\in H:f_h\neq 0\}$ for the corresponding support.

\begin{thm}\label{thm:kap1}
Suppose that $\C[N]$ is a domain and that $H$ is a unique product group. Then  $\C[N]\times_{(\bar{S},\bar{\omega})} H$ is a domain.
\end{thm}
\begin{proof}
Let $f=\sum_{h\in H} f_h d_h$ and $f'=\sum_{h'\in H} f_{h'}' d_{h'}$
be two non-zero elements in $\C[N]\times_{(\bar{S},\bar{\omega})} H$.
Seeking a contradiction, we suppose that $f \bullet f'=0$.
This means that
\begin{equation}\label{eq:MainThmCalc}
0 =
\sum_{h\in A} f_h d_h
\bullet 
\sum_{h'\in B} f_{h'}' d_{h'}
=
\sum_{\substack{h\in A,\\ h'\in B}} 
f_h \star \bar{S}(h)(f_{h'}') \star \bar{\omega}(h,h') d_{hh'},
\end{equation}
where
$A:=\Supp(f)$ and $B:=\Supp(f')$.
Using that $H$ is a unique product group, we find $h\in AB$, $a\in A$, and $b\in B$,
such that $h=ab$ but $h \notin (A\setminus \{a\})(B\setminus \{b\})$.
By combining this with Equation~\eqref{eq:MainThmCalc}, we get 
$f_a \star \bar{S}(a)(f_{b}') \star \bar{\omega}(a,b) d_{ab}=0$, or equivalently, 
$f_a \star \bar{S}(a)(f_{b}') \star \bar{\omega}(a,b)=0$.
It follows that
$f_a \star \bar{S}(a)(f_{b}')=0$,
which is a contradiction because $\C[N]$ is a domain and $f_a, f_b'$ are both non-zero.
\end{proof}

\begin{rmk}
Given a domain $R_0$, a unique product group $H$, and an abstract $(R_0,H)$-crossed system $(\bar{S},\bar{\omega})$, we point out that with little effort the arguments and the result of the previous theorem extends to the $(R_0,H)$-crossed product $R_0\times_{(\bar{S},\bar{\omega})} H$, that is, $R_0\times_{(\bar{S},\bar{\omega})} H$ is also a domain.
\end{rmk}

Combining Theorem~\ref{thm:iso} with
Theorem~\ref{thm:kap1}, we get the following result (\cf~\cite[p.~589]{Pa77}):

\begin{cor}\label{cor:kap1}
Suppose that $N$ satisfies Kaplansky's complex zero-divisor conjecture and that $H$ is a unique product group. Then $G$ satisfies Kaplansky's complex zero-divisor conjecture and the complex idempotent conjecture.
\end{cor}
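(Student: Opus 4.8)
The plan is to combine the structural isomorphism of Theorem~\ref{thm:iso} with the domain conclusion established inside the proof of Theorem~\ref{thm:kap1}. Recall that the assertion ``$G$ satisfies Kaplansky's zero-divisor conjecture'' is the conditional statement that $\C[G]$ is a domain whenever $G$ is torsion-free. I would therefore begin by assuming that $G$ is torsion-free. Since $N$ sits inside $G$ as a subgroup, it inherits torsion-freeness; and as $N$ is assumed to satisfy the zero-divisor conjecture, this forces $\C[N]$ to be a domain.

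Next I would apply Theorem~\ref{thm:iso} to identify $\C[G]$ with the crossed product $\C[N]\times_{(S,\omega)} H$, so that it remains only to verify that this crossed product has no non-trivial zero divisors. To this end I would pick non-zero $f,g\in\C[N]\times_{(S,\omega)} H$ and note that their supports $\text{supp}(f),\text{supp}(g)\subseteq H$ are finite, hence lie in a finitely generated subgroup of $H$. Because $H$ is Abelian and torsion-free, such a subgroup is isomorphic to $\mathbb{Z}^n$ for some $n\in\N$, and both $f$ and $g$ belong to the associated subalgebra $\C[N]\times_{(S,\omega)} \mathbb{Z}^n$. Invoking the inductive argument of Step~2 in the proof of Theorem~\ref{thm:kap1} --- which shows precisely that $\C[N]\times_{(S,\omega)} \mathbb{Z}^n$ is a domain as soon as $\C[N]$ is --- I obtain $f\bullet g\neq 0$. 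Hence $\C[G]\cong\C[N]\times_{(S,\omega)} H$ is a domain, \ie, $G$ satisfies the zero-divisor conjecture.

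For the closing ``in particular'', I would use the elementary hierarchy recorded in the introduction: an affirmative answer to the zero-divisor conjecture yields an affirmative answer to the idempotent conjecture. Concretely, if $p\in\C[G]$ satisfies $p\star p=p$, then $p\star(\delta_e-p)=0$, and since $\C[G]$ has just been shown to be a domain, one of the factors must vanish, forcing $p=0$ or $p=\delta_e$.

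I expect the only delicate point to be the reduction to a finitely generated --- and therefore free Abelian --- subgroup of $H$ together with the transfer of the domain property from $\C[N]\times_{(S,\omega)} \mathbb{Z}^n$ to the full crossed product; everything else is a formal consequence of Theorem~\ref{thm:iso} and the observation that subgroups of torsion-free groups are torsion-free. Since the genuinely computational work underlying the $\mathbb{Z}^n$ case has already been carried out in Theorem~\ref{thm:kap1}, no new hard estimate should be required here.
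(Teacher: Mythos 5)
Your proposal is correct and follows exactly the route the paper intends: the paper gives no written proof of Corollary~\ref{cor:kap1} beyond the remark that it follows from Theorem~\ref{thm:iso} together with the techniques of Theorem~\ref{thm:kap1}, and your argument (reduce to $N$ torsion-free, hence $\C[N]$ a domain; identify $\C[G]$ with $\C[N]\times_{(S,\omega)}H$; pass to the finitely generated, hence free Abelian, subgroup generated by the supports; invoke the inductive domain argument for $\C[N]\times_{(S,\omega)}\mathbb{Z}^n$; deduce triviality of idempotents from the domain property) is precisely that. No gaps.
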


We continue with a series of examples and remarks.

\begin{expl}\label{ex:hei}
The discrete \emph{Heisenberg group} $H_3$ is abstractly defined as the group generated by elements $a$ and $b$ such that the commutator $c = aba^{-1}b^{-1}$ is central. 
It can be realized as the multiplicative group of upper-triangular matrices
\begin{align*}
H_3:=\left\{\left(\begin{matrix}
1 & a & c\\
0 & 1 & b\\
0 & 0 & 1\end{matrix}\right):\,a,b,c\in\mathbb{Z}\right\}.
\end{align*}
Moreover, a short computation shows that $H_3$ is isomorphic (as a group) to the semidirect product $\mathbb{Z}^2\rtimes_S\mathbb{Z}$, where the semidirect product is defined by the group homomorphism
\begin{align*}
S:\mathbb{Z}\rightarrow\Aut(\mathbb{Z}^2), \quad S(k).(m,n):=(m,km+n).
\end{align*}
Consequently, Theorem~\ref{thm:iso} implies that the complex group ring $\C[H_3]$ is isomorphic to $\C[\mathbb{Z}^2]\times_{\bar{S}} \mathbb{Z}$. 
Since $\C[\mathbb{Z}^2]$ is a domain and $\mathbb{Z}$ is Abelian and torsion-free, and hence a unique product group, it follows from Corollary~\ref{cor:kap1} that $H_3$ satisfies Kaplansky's zero-divisor conjecture.
In particular, $\C[H_3]$ has no non-trivial idempotents.
\end{expl}

\begin{rmk}\label{rmk:SolvableUP}
There are two rather general situations in which we can conclude that $\C[G]$ is a domain:
\begin{itemize}
\item[1.] When $G$ is a torsion-free solvable group (see \cite[Theorem~1.4]{KrophollerLinnellMoody}).
\item[2.] When $G$ is a unique product group (see \eg Theorem~\ref{thm:kap1} with $N=\{e\}$).
\end{itemize}
\end{rmk}

\begin{expl}
Let $P$ be Passman's fours group \cite{CrPa2013,Gardam}, which is a non-split extension $1 \to \Z^3 \to P \to \Z/2\Z \times \Z/2\Z \to 1.$
It is easy to see that $P$ is a solvable group, and hence $\C[P]$ is a domain by Remark~\ref{rmk:SolvableUP}.
Now, let us consider the group $G:=P \times F_2$ where $F_2$ is the free group on two generators.
First of all, we notice that $G$ is not a unique product group because $P$ is not (\cf~\cite{Gardam}).
Indeed, there are non-empty finite subsets $A,B$ of $P$ witnessing that $P$ is not a unique product group.
The subsets $A \times \{e\}$ and $B \times \{e\}$ of $P \times F_2$ are witnessing that $G$ is not a unique product group.
Secondly, solvable groups are amenable, and subgroups of amenable groups are amenable. But $G$ contains a copy of the non-amenable group $F_2$ as a subgroup. Hence, $G$ cannot be solvable.

In light of Remark~\ref{rmk:SolvableUP}, we cannot immediately see that $\C[G]$ is a domain.
However, using Corollary~\ref{cor:kap1}, we are able to conclude that $\C[G]$ is a domain.
\end{expl}

\begin{rmk}
We emphasize that it is not known whether every unit in $\C[P]$ is trivial (\cf~\cite{Gardam}).
By Theorem~\ref{thm:iso}, $\C[P] \cong \C[\Z^3] \times_{(\bar{S},\bar{\omega})} (\Z/2\Z \times \Z/2\Z)$,
meaning that we may study the units of $\C[P]$ inside the crossed product on the right-hand side.
Note that every unit in $\C[\Z^3]$ is, as a matter of fact, trivial.
\end{rmk}

\begin{rmk}
The purpose of this remark is to illustrate how to obtain families of groups satisfying the conditions of Corollary~\ref{cor:kap1}. 
\begin{itemize}
\item[1.]
Let $N$ be a group satisfiying Kaplansky's complex zero-divisor conjecture and let $H$ be a unique product group.
Also, let $s:H\to\Out(N)$ be a group homomorphism, where $\Out(N)$ denotes the group of all outer automorphisms of $N$, and let $\Ext(H,N)_s$ be the set of equivalence classes of extensions of $H$ by $N$ inducing $s$.
It is a classical fact that $\Ext(H,N)_s$ is non-empty if and only if a certain cohomology class associated with $s$ vanishes in the third group cohomology $\text{H}^3_{\text{gr}}(H,Z(N))_s$, where $Z(N)$ stands for the center of $N$, and that in this case $\Ext(H,N)_s$ is parametrized by the second group cohomology $\text{H}^2_{\text{gr}}(H,Z(N))_s$. 
By Corollary~\ref{cor:kap1}, each of these extensions satisfies Kaplansky's complex zero-divisor conjecture.
\item[2.]
In the situation of Example~\ref{ex:hei}, the set $\Ext(\mathbb{Z},\mathbb{Z}^2)_s$ consists of a single element, where $s:=q\circ S$ and $q:\Aut(\mathbb{Z}^2)\to\Out(\mathbb{Z}^2)$ denotes the canonical projection map. 
Indeed, it is a well-known fact that $\text{H}^n_{\text{gr}}(\mathbb{Z},\mathbb{Z}^2)_s=0$ for all $n>1$.
\item[3.]
It is also possible to realize $H_3$ as a central group extension of $\mathbb{Z}$ by $\mathbb{Z}^2$ with respect to the group 2-cocycle $\omega:\mathbb{Z}^2\times\mathbb{Z}^2\to\mathbb{Z}$ defined by 
$\omega\left((k,k'),(l,l')\right):=k+l'$.
Moreover, the set $\Ext(\mathbb{Z}^2,\mathbb{Z})$ is parametrized by $\text{H}^2_{\text{gr}}(\mathbb{Z}^2,\mathbb{Z})\cong\mathbb{Z}$. 
Consequently, we obtain an infinite family of groups satisfying Kaplansky's complex zero-divisor conjecture.
\end{itemize}
\end{rmk}

\begin{rmk}
The map $\Phi$ from Theorem~\ref{thm:iso} may also be used to turn $\C[N]\times_{(\bar{S},\bar{\omega})} H$ into a \Star algebra. 
The purpose of this remark is to demonstrate that if $H$ is amenable, then  the full group C\Star algebra $C^*(G)$ is isomorphic to a suitable C\Star completion of $\C[N]\times_{(\bar{S},\bar{\omega})} H$.
For this, we regard $\C[G]$ as a dense subset of $C^*(G)$ and note that the map $\bar{\sigma}$ in Equation~\eqref{eq:algsec} actually takes values in the unitary group $U(C^*(G))$. 
Moreover, we get induced maps
\begin{align*}
\bar{S}:H\rightarrow\Aut(C^*(N)) \quad \text{and} \quad \bar{\omega}:H\times H\rightarrow U(C^*(N))
\end{align*} 
satisfying $\bar{S}(H)(\C[N])\subseteq\C[N]$ and $\text{image}(\omega)\subseteq\C[N]$.
A straightforward multiplication now shows that the multiplication and the involution are continuous for the $\ell^1$-norm
\begin{align*}
\left\Vert\sum_{h\in H} f_h d_h\right\Vert_1:=\sum_{h\in H} \Vert f_h\Vert_{C^*(N)},
\end{align*}
and we write $C^*(\C[N]\times_{(\bar{S},\bar{\omega})} H)$ for the corresponding enveloping C\Star algebra.
Finally, if $H$ is amenable, then~\cite[Prop.~4.2]{Ex96} implies that $C^*(G)$ is isomorphic to $C^*(\C[N]\times_{(\bar{S},\bar{\omega})} H)$, because both algebras are topologically graded by $H$ and generate isomorphic Fell bundles.
\end{rmk}

\section{Representation via global sections}\label{sec:repglobsec}

Let $N$ and $H$ be torsion-free and countable discrete groups with $N$ Abelian. 
Furthermore, let $G$ be a central extension of $H$ by $N$. 
Our aim is to analyze the structure of the group C\Star algebra $C^*(G)$ in terms of data associated with $N$ and $H$. 
For technical reasons, we additionally assume that $H$ is amenable. 
Then $G$ is amenable, and hence~\cite[Thm.~1.2]{PaRa92} implies that $C^*(G)$ is isomorphic to the C\Star algebra $\Gamma(E)$ of global continuous sections of a C\Star algebraic bundle $q:E\to \widehat{N}$, $\widehat{N}$ being the dual group of $N$ endowed with its natural topology turning it into a compact Hausdorff space. 
Moreover, its fibre $E_\varepsilon:=q^{-1}(\{\varepsilon\})$ at the trivial character $\varepsilon\in\widehat{N}$ is \Star isomorphic to $C^*(H)$. The proof of the next statement is very much inspired by the proof of \cite[Thm.~2.18]{EckRa18}.

\begin{lemma}\label{lem:kap2}
Let $N$ and $H$ be torsion-free and countable discrete groups with $N$ Abelian. 
Additionally, suppose that $H$ is amenable and let $G$ be a central extension of $H$ by $N$. 
If $C^*(H)$ contains no non-trivial projections, then the same is true for $C^*(G)$.
\end{lemma}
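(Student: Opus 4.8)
The plan is to exploit the bundle picture $C^*(G)\cong\Gamma(E)$ over $\hat Z$ together with the connectedness of $\hat Z$, thereby reducing the whole statement to the behaviour of a single scalar function attached to a projection. Concretely, let $p\in\Gamma(E)$ be a projection, \ie $p=p^*=p^2$. Since each evaluation map $\mathrm{ev}_\chi:\Gamma(E)\to E_\chi$ is a unital \Star homomorphism, every $p(\chi):=\mathrm{ev}_\chi(p)$ is a projection in the fibre $E_\chi$, so that $\Vert p(\chi)\Vert\in\{0,1\}$. The function $N_p:\hat Z\to\R$, $N_p(\chi):=\Vert p(\chi)\Vert$, is therefore $\{0,1\}$-valued, and the first thing I would establish is that it is constant.

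For this I would use two ingredients. First, because $Z$ is discrete, torsion-free and Abelian, its dual $\hat Z$ is a compact connected Hausdorff space; connectedness is exactly the point at which torsion-freeness of $Z$ enters. Second, I would use that $E\to\hat Z$ is a genuine continuous field of C\Star algebras, so that $\chi\mapsto\Vert s(\chi)\Vert$ is continuous for every $s\in\Gamma(E)$ and the global norm equals the supremum of the fibre norms. Granting this, $N_p$ is a continuous map from a connected space into $\{0,1\}$, hence constant.

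It then remains to pin down the constant value by inspecting the distinguished fibre. At the trivial character $\varepsilon\in\hat Z$ we have $E_\varepsilon\cong C^*(H)$, and $p(\varepsilon)$ is a projection there; by hypothesis $C^*(H)$ has no non-trivial projection, so $p(\varepsilon)\in\{0,1_{E_\varepsilon}\}$. If $p(\varepsilon)=0$, then $N_p(\varepsilon)=0$, whence $N_p\equiv 0$ and $\Vert p\Vert=\sup_{\chi}\Vert p(\chi)\Vert=0$, so $p=0$. If instead $p(\varepsilon)=1_{E_\varepsilon}$, I would run the same argument with the projection $1-p$, whose value at $\varepsilon$ vanishes, to conclude $1-p=0$, \ie $p=1$. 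In either case $p$ is trivial, which is the assertion.

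The main obstacle is the second ingredient above: one must know that $E\to\hat Z$ is continuous, that is, that the fibrewise norm is both upper and lower semicontinuous, and not merely that $C^*(G)$ is an upper-semicontinuous $C(\hat Z)$-algebra. Upper semicontinuity alone would only make $\{N_p=0\}$ open, which is insufficient to force $N_p$ to be constant on a possibly non-locally-connected space such as a solenoid. This is precisely where amenability of $H$, and hence of $G$, is needed: it is what allows one to identify $C^*(G)$ with the section algebra of a \emph{continuous} field via \cite[Thm. 1.2]{PaRa92}, in the spirit of the classical continuity of the field of rotation algebras. Once continuity of the field is secured, the argument is purely topological and requires neither a trace nor faithfulness; alternatively, the same conclusion can be reached by tracking the canonical tracial state through the fibres, but the norm function $N_p$ furnishes the cleanest route.
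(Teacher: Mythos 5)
Your argument is correct and is essentially the paper's own proof: represent $p$ as a section of the bundle over $\hat Z$, observe that $\chi\mapsto\Vert p(\chi)\Vert$ is a continuous $\{0,1\}$-valued function on the connected space $\hat Z$ (connectedness coming from torsion-freeness of $Z$), pin down its value at the trivial character using the hypothesis on $C^*(H)\cong E_\varepsilon$, and dispose of the case $p(\varepsilon)=1$ by passing to $1-p$. Your added remark that one genuinely needs lower semicontinuity of the fibrewise norm (not just an upper-semicontinuous $C(\hat Z)$-algebra structure) is a fair point that the paper leaves implicit in its appeal to the Packer--Raeburn bundle decomposition, but it does not change the route.
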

\begin{proof}
Let $p$ be a projection in $C^*(G)$. 
We write $s_p$ for the corresponding continuous global section in $\Gamma(E)$. 
By assumption, we either have $s_p(\varepsilon)=0\in E_\varepsilon$ or $s_p(\varepsilon)=1\in E_\varepsilon$.
For now, assume that $s_p(\varepsilon)=0\in E_\varepsilon$.
Since $N$ is torsion-free, its dual group $\widehat{N}$ is connected, and therefore the function 
\begin{align*}
f_p:\widehat{N} \to \mathbb{R}, \quad f_p(\chi):=\Vert s_p(\chi)\Vert_\chi,
\end{align*} 
where $\Vert\cdot\Vert_\chi$ denotes the C\Star norm of $E_\chi:=q^{-1}(\{\chi\})$, is a continuous $\{0,1\}$-valued function on a connected space with $f_p(\varepsilon)=0$. It follows that $f_p$ must be 0 everywhere which in turn implies that $s_p\equiv0$. That is, we have $p=0$.
Analogously, the case $s_p(\varepsilon)=1\in E_\varepsilon$ leads to $p=1$.
\end{proof}

\begin{cor}\label{cor:kap2}
Let $N$ and $H$ be torsion-free and countable discrete groups with $N$ Abelian. 
Additionally, suppose that $H$ is amenable and let $G$ be a central extension of $H$ by $N$. 
If $H$ satisfies the Kadison-Kaplansky conjecture, then so does the group $G$.
\end{cor}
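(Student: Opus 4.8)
The plan is to reduce the corollary directly to Lemma~\ref{lem:kap2}; the analytic substance is already contained in that lemma, so the remaining task is to reconcile its hypotheses---phrased in terms of the \emph{full} group C\Star algebra and projections---with the formulation of the Kadison-Kaplansky conjecture, which concerns the \emph{reduced} group C\Star algebra and idempotents.

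First I would verify that $G$ is amenable. Since $Z$ is Abelian it is amenable, $H$ is amenable by hypothesis, and amenability is preserved under group extensions; as $G$ is an extension of $Z$ by $H$, it follows that $G$ is amenable. Consequently the canonical quotient maps are isomorphisms, $C^*(G)\cong C^*_r(G)$ and $C^*(H)\cong C^*_r(H)$, so that the hypothesis on $H$ and the conclusion sought for $G$ may be transported freely between the full and the reduced setting.

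Next I would check that $G$ is torsion-free, which is needed for the Kadison-Kaplansky conjecture to be meaningful for $G$ in the first place. If $g\in G$ satisfies $g^n=e_G$ for some $n\geq 1$, then its image in the quotient $H=G/Z$ is a torsion element and hence trivial, since $H$ is torsion-free; therefore $g\in Z$, and as $Z$ is torsion-free we obtain $g=e_G$.

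With these preparations in place the argument concludes quickly. The assumption that $H$ satisfies the Kadison-Kaplansky conjecture means precisely that $C^*_r(H)\cong C^*(H)$ contains no non-trivial projections, whence Lemma~\ref{lem:kap2} yields that $C^*(G)\cong C^*_r(G)$ contains no non-trivial projections either. Invoking the standard fact that a C\Star algebra admits a non-trivial idempotent if and only if it admits a non-trivial projection, I would conclude that the torsion-free amenable group $G$ satisfies the Kadison-Kaplansky conjecture. I do not anticipate a genuine obstacle here: the only points requiring care are the bookkeeping steps above---amenability, torsion-freeness, and the passage between full and reduced algebras and between idempotents and projections---each of which is routine.
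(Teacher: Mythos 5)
Your argument is correct and is exactly the intended deduction: the paper states the corollary without a separate proof, relying on Lemma~\ref{lem:kap2} together with the amenability of $G$ (so full and reduced C\Star algebras coincide) and the fact, noted in the remark immediately following, that every idempotent in a C\Star algebra is similar to a projection. Your additional verification that $G$ is torsion-free is a sensible piece of bookkeeping the paper leaves implicit.
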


\begin{rmk}
\begin{itemize}
\item[1.]
Since every idempotent in a C\Star algebra is similar to a projection, the conclusion of Lemma~\ref{lem:kap2} still holds in the more general context of idempotents.
\item[2.]
We would like to point out that the conclusion of Lemma~\ref{lem:kap2} can also be reached using more heavy machinery. 
In fact, since $G$ is amenable, the surjectivity of the assembly map in the Baum-Connes conjecture implies that $G$ satisfies the Kadison-Kaplansky conjecture (\cf \cite[Cor.~9.2]{Hig2001}).
\end{itemize}
\end{rmk}

\begin{expl}
It is also possible to realize the group $H_3$ from Example~\ref{ex:hei} as a central group extension of $\mathbb{Z}$ by $\mathbb{Z}^2$ with respect to the group 2-cocycle $\omega:\mathbb{Z}^2\times\mathbb{Z}^2\to\mathbb{Z}$ defined by $\omega\left((k,k'),(l,l')\right):=k+l'$. 
Applying Corollary~\ref{cor:kap2}, we can assert that $H_3$ satisfies the Kadison-Kaplansky conjecture.
\end{expl}

\section*{Acknowledgement}
The second named author would like to express his gratitude to \emph{Carl Tryggers Stiftelse f\"or Vetenskaplig Forskning} for supporting this research.

\end{document}